\theoremstyle{plain}
\newtheorem{thm}{Theorem}[section]
\newtheorem{pro}[thm]{Proposition}
\theoremstyle{definition}
\newtheorem{defn}[thm]{Definition}
\newtheorem{rem}[thm]{Remark}
\newtheorem{cor}[thm]{Corollary}
\numberwithin{equation}{section}
\newcommand{\R}{\mathbb{R}}
\newcommand{\N}{\mathbb{N}}
\begin{document}

\title[Positive solutions for elliptic systems]{A non-variational approach to the existence of nonzero positive solutions for elliptic systems}  

\date{}

\author[J. {\'A}. Cid]{Jos\'e \'Angel Cid}
\address{Jos{\'e} {\'A}ngel Cid, Departamento de Matem\'aticas, Universidade de Vigo, 32004, Pabell\'on 3, Campus de Ourense, Spain}%
\email{angelcid@uvigo.es}%

\author[G. Infante]{Gennaro Infante}
\address{Gennaro Infante, Dipartimento di Matematica e Informatica, Universit\`{a} della
Calabria, 87036 Arcavacata di Rende, Cosenza, Italy}%
\email{gennaro.infante@unical.it}%

\begin{abstract} 
We provide new results on the existence of nonzero positive weak solutions for a class of second order elliptic systems. Our approach relies on a combined use of iterative techniques and classical fixed point index. Some examples are presented to illustrate the theoretical results.
\end{abstract}

\subjclass[2010]{Primary 35J47, secondary 35B09, 35J57, 35J60, 47H10}

\keywords{}

\maketitle

\section{Introduction}
In the recent papers~\cite{lan1, lan2} Lan, by means of classical fixed point index theory, studied under \emph{sublinear} conditions the existence of nonzero positive solutions of systems of second order elliptic boundary value problems of the kind 
\begin{equation}
  \label{sys-intro}
 \left\{
\begin{array}{ll}
   Lu_i(x)= f_i(x,u(x)), &  x\in \Omega,\quad i=1,2,\ldots,n, \\
 Bu_i(x)=0, & x\in \partial \Omega,\quad i=1,2,\ldots,n,
\end{array}
\right.
\end{equation}
where $\Omega\subset \R^m$ is a suitable bounded domain,  $L$ is a strongly uniformly elliptic operator, $u(x)=(u_1(x),u_2(x),\ldots,u_n(x))$, $f_i:\bar{\Omega}\times  \R^m_+\to \R_+$ are continuous functions and $B$ is a first order boundary operator. 
A key assumption utilized by Lan is an asymptotic comparison between the nonlinearities and the principal eigenvalue of the corresponding linear system.

The formulation~\eqref{sys-intro} is rather general and covers systems of quasilinear elliptic equations of the type
\begin{equation}\label{lap-sys}
\left\{
\begin{array}{ll}
-\Delta u_1(x)=\lambda_1 f_1(x, u_1(x), u_2(x)), & x\in \Omega, \\
-\Delta u_2(x)=\lambda_2 f_2(x, u_1(x), u_2(x)), & x\in \Omega, \\
u_1(x)=u_2(x)=0, & x\in \partial \Omega,%
\end{array}%
\right.  
\end{equation}%
where $\lambda_i>0$. Elliptic systems of the kind~\eqref{lap-sys} have been widely investigated, by variational and topological methods,
under a variety of growth conditions; we refer the reader to the reviews by de~Figuereido~\cite{defig} and Ruf~\cite{bruf}, the papers~\cite{chzh, chzh07, cui3, defigetal, gimmrp, ma2, zhch} and references therein. 

Within the topological framework, fixed point index plays a key role and, typically, the methodology is to compute the index on a small ball and on a large ball, which, in turn, involves controlling the behaviour of an associated operator on two boundaries, see for example~\cite{alves-defig, defigetal, defig, lan1, lan-zhang, zou}.  
In the context of ODEs Franco and co-authors~\cite{df-gi-jp-prse} motivated by earlier works of Persson~\cite{persson} and continuing the work by Cabada and Cid~\cite{cabcid} and Cid et al.~\cite{jc-df-fm}, managed to replace the comparison between two boundaries with a comparison between a boundary and a point, at the cost of having some extra monotonicity requirement on the nonlinearities. These ideas were further developed, also in the case of ODEs, by Cabada and co-authors~\cite{acjcgi1, CCI2}. 

Here we apply a refinement of the results in~\cite{CCI2} to the elliptic system 
\begin{equation}
  \label{ellbvp-intro}
 \left\{
\begin{array}{ll}
   Lu_i(x)=\lambda_i f_i(x,u(x)), &  x\in \Omega,\quad i=1,2,\ldots,n, \\
 Bu_i(x)=0, & x\in \partial \Omega,\quad i=1,2,\ldots,n,
\end{array}
\right.
\end{equation}
where
$f_i\in C\bigl(\bar{\Omega}\times \prod_{j=1}^n[0,\rho_j]\bigr)$ and $\rho_i>0$ for $i=1,2,\ldots,n$. Our approach relies on the use of  the monotone iterative method jointly with the fixed point index. We stress that Cheng and Zhang~\cite{chzh} combined the method of sub-supersolutions with the Leray-Schauder degree, in the case of elliptic systems with Dirichlet BCs. 
Here we prove, under natural assumptions on the nonlinearities $f_i$, that nonzero positive weak solutions of the BVP~\eqref{ellbvp-intro} exist for certain values of the parameters $\lambda_i$. We also present some numerical examples to illustrate our theory.  As far as we know, our results are new and complement the results of~\cite{chzh, lan1}. 

\section{Preliminaries}
A subset $P$ of a
real Banach space $X$ is a {\it cone} if it is closed, $P+P\subset P$,
  $\lambda P\subset P$ for all $\lambda\ge 0$ and  $P\cap(-P)=\{\theta\}$. A cone $P$ defines the partial ordering in
$X$ given by 
$$ x\le y \quad \mbox{if and only if $y-x\in P$}.$$
 For $x,\ y \in X$,
with $x\le y$, we define the ordered interval \[[x,y]=\{z\in X : x\le z\le y\}.\]

The cone $P$ is {\it normal} if there exists $d>0$ such that for all $x, y\in X$ with $\theta \le x\le y$ then
$\|x\|\le d \|y\| $. The cone $P$ is {\it total} if $X=\overline{P-P}$. We remark that every cone $P$ in a normed space has the {\it Archimedean property}, that is, $n x\le y$ for all $n\in \N$ and some $y\in X$ implies $x\le \theta$. In the sequel, with abuse of notation, we will use the same symbol ``$\ge$" for the different cones appearing in the paper since the context will avoid confusion.

We denote the closed ball of center $x_0\in X$ and radius $r>0$ as
\[B[x_0,r]=\{x\in X : \|x-x_0\|\le r \},\] and the intersection of the cone with the open ball centered at the origin and radius $r>0$ as
$$P_r=P \cap \{x\in X : \|x\|<r\}.$$
In the sequel the closure and the boundary of subsets of $P$ are understood to be relative to $P$.
An operator $T:D\subseteq X\to X$ is said to be monotone non-decreasing if for every $x,y\in D$ with $x\leq y$ we have $Tx\leq Ty$.
We recall the following theorem, known as the monotone iterative method (see, for example,~\cite{Amann-rev,zeidler}). 

\begin{thm} \label{mim} Let $X$ be a real Banach space with normal order cone
$P$. Suppose that there exist $\alpha\le \beta$  such that $T\colon [\alpha,\beta]\subset P\to  P$ is a completely continuous monotone non-decreasing operator with $\alpha \le T \alpha $  and $T\beta \le \beta$. Then $T$ has a fixed point and the iterative sequence  $\alpha_{n+1}= T \alpha_n$, with $\alpha_0=\alpha$, converges to the smallest fixed point of $T$ in $[\alpha,\beta]$, and the sequence $\beta_{n+1} = T \beta_n$, with $\beta_0=\beta$, converges to the greatest fixed point of $T$ in $[\alpha,\beta]$. 
\end{thm}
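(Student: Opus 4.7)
The plan is to establish the monotonicity and well-definition of the two iterative sequences, extract convergent subsequences via complete continuity, and then upgrade subsequential convergence to full convergence using normality. Finally, I would verify the extremality properties by comparing with an arbitrary fixed point.

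\medskip

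\textbf{Step 1 (monotone interlacing).} First I would show by induction, using that $T$ is non-decreasing together with the hypotheses $\alpha\le T\alpha$ and $T\beta\le\beta$, that
\[
\alpha=\alpha_0\le\alpha_1\le\alpha_2\le\cdots\le\beta_2\le\beta_1\le\beta_0=\beta.
\]
The base step $\alpha_0\le\beta_0$ is given; if $\alpha_n\le\alpha_{n+1}$ then applying $T$ yields $\alpha_{n+1}\le\alpha_{n+2}$, and analogously for $\beta_n$. A parallel induction gives $\alpha_n\le\beta_n$ for all $n$, so both sequences remain in $[\alpha,\beta]$ and $T$ can be iterated on them.

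\medskip

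\textbf{Step 2 (compactness and convergence).} Since $[\alpha,\beta]$ is bounded (by normality of $P$, from $\theta\le z-\alpha\le\beta-\alpha$) and $T$ is completely continuous, the sets $\{T\alpha_n\}$ and $\{T\beta_n\}$ are relatively compact, so some subsequences $\alpha_{n_k}\to u^*$ and $\beta_{m_k}\to v^*$ exist. To promote this to full convergence of $\{\alpha_n\}$, I would note that $\alpha_n\le\alpha_{n_k}$ whenever $n_k\ge n$; closedness of the cone then gives $\alpha_n\le u^*$ for all $n$. For $n\ge n_K$ we get $\theta\le u^*-\alpha_n\le u^*-\alpha_{n_K}$, so normality of $P$ with constant $d$ yields $\|u^*-\alpha_n\|\le d\,\|u^*-\alpha_{n_K}\|$, which is the required convergence $\alpha_n\to u^*$. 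The same argument, reversing inequalities, yields $\beta_n\to v^*$.

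\medskip

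\textbf{Step 3 (fixed-point identification and extremality).} Continuity of $T$ applied to $\alpha_{n+1}=T\alpha_n$ and $\beta_{n+1}=T\beta_n$ gives $Tu^*=u^*$ and $Tv^*=v^*$, producing the asserted fixed points. Finally, if $z\in[\alpha,\beta]$ is any fixed point of $T$, then $\alpha\le z$ implies $\alpha_1=T\alpha\le Tz=z$, and inductively $\alpha_n\le z$ for every $n$; passing to the limit (again using closedness of $P$) yields $u^*\le z$, so $u^*$ is the least fixed point in $[\alpha,\beta]$. The symmetric argument gives $v^*\ge z$.

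\medskip

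I expect the only subtle point to be Step~2, the upgrade from subsequential to full convergence: monotone bounded sequences need not converge in a general Banach space, and the argument crucially combines complete continuity (to get a cluster point) with normality of the cone and closedness of $P$ (to sandwich the tail of the sequence around the cluster point). Everything else is straightforward induction plus continuity.
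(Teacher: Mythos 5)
Your proof is correct and complete: the monotone interlacing, the upgrade from a cluster point to full convergence via normality and closedness of $P$, and the extremality argument are exactly the classical argument for the monotone iterative method. The paper itself does not prove Theorem~\ref{mim} but merely recalls it with references to Amann and Zeidler, and your argument coincides with the standard proof found there, so there is nothing to fix.
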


In the following Proposition we recall the main properties of the fixed point index of a completely continuous operator relative to a cone, for more details see~\cite{Amann-rev, guolak}.  

\begin{pro}\label{propindex} Let $D$ be an open bounded set of $X$ with $0\in D_{P}$ and
$\overline{D}_{P}\ne P$, where $D_{P}=D\cap P$. 
Assume that $T:\overline{D}_{P}\to P$ is a completely continuous operator such that
$x\neq Tx$ for $x\in \partial D_{P}$. Then the fixed point index
 $i_{P}(T, D_{P})$ has the following properties:
 
\begin{itemize}

\item[$(i)$] If there exists $e\in P\setminus \{0\}$
such that $x\neq Tx+\lambda e$ for all $x\in \partial D_P$ and all
$\lambda>0$, then $i_{P}(T, D_{P})=0$.

\item[] For example $(i)$ holds if $Tx\not\leq x$ for $x\in
\partial D_P$.

\item[$(ii)$] If $\|Tx\|\ge \|x\|$ for $x\in
\partial D_P$, then $i_{P}(T, D_{P})=0$.

\item[$(iii)$] If $Tx \neq \lambda x$ for all $x\in
\partial D_P$ and all $\lambda > 1$, then $i_{P}(T, D_{P})=1$.

\item[] For example $(iii)$ holds if either $Tx\not\geq x$ for $x\in
\partial D_P$ or $\|Tx\|\le \|x\|$ for $x\in
\partial D_P$.

\item[(iv)] Let $D^{1}$ be open bounded in $X$ such that
$\overline{D^{1}_{P}}\subset D_P$. If $i_{P}(T, D_{P})=1$ and $i_{P}(T,
D_{P}^{1})=0$, then $T$ has a fixed point in $D_{P}\setminus
\overline{D_{P}^{1}}$. The same holds if 
$i_{P}(T, D_{P})=0$ and $i_{P}(T, D_{P}^{1})=1$.
\end{itemize}
\end{pro}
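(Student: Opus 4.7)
The plan is to verify each item (i)--(iv) by combining the three foundational axioms of the fixed point index on a cone — normalization, homotopy invariance under admissible compact homotopies, and additivity — with the complete continuity of $T$, which guarantees admissibility of every homotopy once fixed points on the relevant boundaries are ruled out. Throughout I use the standing non-degeneracy hypothesis $x \neq Tx$ on $\partial D_P$ to handle the endpoint of each homotopy parameter.

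For item (i) I would take the affine homotopy $H(s, x) = Tx + s e$ with $s \in [0, M]$. The hypothesis excludes fixed points on $\partial D_P$ for $s > 0$, so homotopy invariance yields $i_P(T, D_P) = i_P(H(M, \cdot), D_P)$; since $T(\overline{D_P})$ is relatively compact, choosing $M$ sufficiently large makes $H(M, \cdot)$ fixed-point free even inside $\overline{D_P}$, forcing $i_P(T, D_P) = 0$. The illustrative remark is immediate, since $x = Tx + \lambda e$ with $\lambda > 0$ and $e \in P$ is the same as $Tx \leq x$. For item (iii) I would take the linear homotopy $H(s, x) = sTx$ on $s \in [0, 1]$: normalization gives $i_P(0, D_P) = 1$, and any boundary fixed point $sTx = x$ forces $s > 0$ (as $0 \notin \partial D_P$), hence $Tx = \lambda x$ with $\lambda = 1/s \geq 1$, which contradicts either the standing assumption (at $\lambda = 1$) or the hypothesis (at $\lambda > 1$); homotopy invariance then delivers $i_P(T, D_P) = 1$. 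The two examples are immediate, since $\lambda x \geq x$ and $\|\lambda x\| > \|x\|$ whenever $\lambda > 1$ and $x \in P \setminus \{0\}$.

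For item (ii) I expect the main obstacle. Using the homotopy $H(s, x) = sTx$ on $s \geq 1$, an equality $x = sTx$ on $\partial D_P$ gives $\|x\| = s\|Tx\| \geq s\|x\|$, hence $s = 1$, which in turn forces $Tx = x$ on $\partial D_P$ and contradicts the standing assumption. This removes boundary fixed points throughout $s \in [1, \infty)$, but to conclude $i_P(T, D_P) = 0$ one still needs a tail argument driving fixed points of $sT$ out of $\overline{D_P}$ as $s \to \infty$. Without normality of the cone this step is delicate; a clean workaround is to derive (ii) from (i) by constructing an element $e \in P \setminus \{0\}$ adapted to the image of $T$ on $\partial D_P$, so that $x = Tx + \lambda e$ for $\lambda > 0$ would violate the norm inequality.

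Finally, item (iv) is a direct application of additivity of the index: since $T$ has no fixed points on the relative boundary of $D_P \setminus \overline{D_P^1}$,
\[
i_P(T, D_P) = i_P(T, D_P^1) + i_P(T, D_P \setminus \overline{D_P^1}),
\]
and the stated hypotheses force the last term to equal $\pm 1$. The solution property of a nonzero index then produces a fixed point of $T$ in $D_P \setminus \overline{D_P^1}$, as claimed.
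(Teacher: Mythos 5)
First, note that the paper does not prove this proposition at all: it is recalled as a standard toolkit and justified only by the citations to Amann's survey and Guo--Lakshmikantham, so any complete argument you give is necessarily a different route. Your reconstructions of (i), (iii) and (iv) are essentially the standard ones and are fine: in (i) the homotopy $Tx+se$ is admissible on $\partial D_P$ for all $s>0$ by hypothesis and at $s=0$ by the standing assumption, and for $M\|e\|>\sup_{\overline{D}_P}(\|x\|+\|Tx\|)$ the map $T+Me$ is fixed point free in $\overline{D}_P$, so the solution property gives index $0$; in (iii) the homotopy $sTx$, $s\in[0,1]$, together with $0\in D_P$ and normalization gives index $1$; and (iv) is additivity/excision plus the existence property, using that $T$ is fixed point free on $\partial D_P\cup\partial D_P^1$ (the latter being implicit in the assumption that $i_P(T,D_P^1)$ is defined, as the paper's Remark points out).

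The genuine gap is in (ii), and your own diagnosis of the obstacle is accurate, but the proposed repair does not work. Deriving (ii) from (i) is blocked for a structural reason: from $x=Tx+\lambda e$ with $\lambda>0$ and $e\in P\setminus\{0\}$ you only get the order information $Tx\le x$ (and $\lambda e\le x$), and in a general Banach space the cone order gives no control of norms -- even in a normal cone with constant $d\ge1$ one only gets $\|Tx\|\le d\|x\|$, which is perfectly compatible with $\|Tx\|\ge\|x\|$. So no choice of $e$ ``adapted to the image of $T$'' can make $x=Tx+\lambda e$ contradict the norm inequality, and (ii) is genuinely not a corollary of (i). Likewise, the tail of your $H(s,x)=sTx$, $s\ge1$, homotopy cannot be closed by showing $sT$ is fixed point free in $\overline{D}_P$ for large $s$: if $T\theta=\theta$ (the typical situation in this paper, where one is hunting for \emph{nonzero} fixed points), then $\theta$ is an interior fixed point of $sT$ for every $s$, so the absence-of-fixed-points argument fails. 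A correct classical completion is: use the norm condition to justify $i_P(T,D_P)=i_P(sT,D_P)$ for every $s\ge1$ (your computation $\|x\|=s\|Tx\|\ge s\|x\|$ on $\partial D_P$ is right), then for $s$ large perform a \emph{second} homotopy $(1-t)\,sTx+t\,c$ joining $sT$ to a constant $c\in P$ with $\|c\|$ large; on $\partial D_P$ both $sTx$ and $c$ lie in $P$ and have large norm ($\|sTx\|\ge s\|x\|\ge s\,\mathrm{dist}(0,\partial D_P)$), so normality of $P$ (or the corresponding lemma in Krasnosel'ski\u{\i}--Zabre\u{\i}ko or Guo--Lakshmikantham) bounds $\|(1-t)sTx+tc\|$ from below by a quantity exceeding $\sup_{\overline{D}_P}\|x\|$, making the homotopy admissible and the terminal map fixed point free, whence index $0$. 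Without some such additional ingredient (normality, or an outright citation as the paper itself does), your item (ii) is not proved.
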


The following result, ensuring the existence of a non-trivial fixed point, is a modification of \cite[Theorem 2.3]{CCI2}. The short proof is presented here for completeness.
\begin{thm} \label{thCCI} Let $X$ be a real Banach space, $P$ a normal cone
with normal constant $d\ge 1$ and nonempty interior (i.e. solid) and for $\rho>0$ let $T\colon
\overline{P_{\rho}}\to P$ be a completely continuous operator. Assume that
\begin{itemize}
\item[(1)]  there exist $\beta \in \overline{P_{\rho/d}}$, with $T\beta\le \beta$, and $0<{R}<\rho$ such that $B[\beta,{R}]\subset P$,

\item[(2)] the map $T$ is non-decreasing in the set
\[ \tilde{\mathcal{P}}=\Bigl\{x\in P : x\le \beta \quad \mbox{and} \quad 
\frac{{R}}{d}\le \|x\|
\Bigr\},
\]{}
\item[(3)] there exists a (relatively) open bounded set $V\subset P_{\rho}$ such that $\theta\in V$, $i_P(T,V)=0$ and either
$\overline{P_{R}}\subset V$ or $\overline{V}\subset P_{R}$.
 \end{itemize}
Then the map $T$ has at least one non-zero fixed point $x_1$ in $P$,
 that
$$ \mbox{either belongs  to $\tilde{\mathcal{P}}$ or belongs to} 
\left\{ \begin{array}{ll} V\setminus \overline{P_{R}},  & \mbox{in case $\overline{P_{R}}\subset V$,} \\ P_{R}\setminus \overline{V},  & \mbox{in case $\overline{V}\subset P_{R}$.}\end{array}\right.
$$
\end{thm}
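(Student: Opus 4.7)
The plan is to combine the monotone iterative method (Theorem \ref{mim}) with the fixed point index (Proposition \ref{propindex}) via a dichotomy: either $T$ already has a fixed point inside $\tilde{\mathcal{P}}$, or the absence of such a fixed point lets me prove $i_P(T,P_R)=1$, which combined with the given $i_P(T,V)=0$ locates a fixed point in the annular region described by hypothesis (3).

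As preliminaries I would extract two geometric consequences of $B[\beta,R]\subset P$. First, for any $x$ with $\|x\|\le R$ the vector $\beta-x$ satisfies $\|(\beta-x)-\beta\|=\|x\|\le R$, so $\beta-x\in P$ and $x\le \beta$; in particular $\overline{P_R}\subset[\theta,\beta]$. Second, $\|\beta\|\ge R$: otherwise, setting $u=\beta/\|\beta\|$, the element $\beta-Ru=(1-R/\|\beta\|)\beta$ would belong both to $P$ (by the previous observation) and to $-P$ (being a negative multiple of $\beta$), forcing $\beta=Ru$ and $\|\beta\|=R$, a contradiction. Since $d\ge1$, it follows that $\beta\in\tilde{\mathcal{P}}$ and that every $x\in\partial P_R$ satisfies both $x\le\beta$ and $\|x\|=R\ge R/d$, hence $\partial P_R\subset\tilde{\mathcal{P}}$.

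The key step is the claim that, if $T$ has no fixed point in $\tilde{\mathcal{P}}$, then $Tx\not\ge x$ for every $x\in\partial P_R$. Suppose for contradiction that $x_0\in\partial P_R$ satisfies $x_0\le Tx_0$. The order interval $[x_0,\beta]$ sits inside $\tilde{\mathcal{P}}$, because any $z\in[x_0,\beta]$ satisfies $z\le\beta$ and, by normality applied to $\theta\le x_0\le z$, $\|z\|\ge \|x_0\|/d=R/d$. Therefore $T$ is monotone non-decreasing on $[x_0,\beta]$, is completely continuous as the restriction of the given map, and satisfies $x_0\le Tx_0$ and $T\beta\le\beta$; so Theorem \ref{mim} applies and produces a fixed point $y\in[x_0,\beta]\subset\tilde{\mathcal{P}}$, contradicting the working assumption.

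Given the claim, the example following Proposition \ref{propindex}(iii) yields $i_P(T,P_R)=1$, while hypothesis (3) provides $i_P(T,V)=0$. Applying Proposition \ref{propindex}(iv) to the nested pair $(V,P_R)$ when $\overline{P_R}\subset V$, or to $(P_R,V)$ when $\overline{V}\subset P_R$, delivers a fixed point in $V\setminus\overline{P_R}$ or in $P_R\setminus\overline{V}$ respectively; in either case the origin is excluded, so the fixed point is non-zero. I expect the main obstacle to be the geometric observation $\|\beta\|\ge R$, because it is precisely this inequality that puts $\partial P_R$ inside $\tilde{\mathcal{P}}$ and thus makes the monotonicity hypothesis usable exactly where the index is being computed; once this geometric point is secured, the iterative step reduces to a direct invocation of Theorem \ref{mim} rather than an ad hoc convergence argument.
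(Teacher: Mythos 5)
Your proof is correct and follows essentially the same route as the paper's: the dichotomy between a point $x_0\in\partial P_R$ with $Tx_0\ge x_0$ (which, via normality and the monotone iterative method of Theorem~\ref{mim} on $[x_0,\beta]\subset\tilde{\mathcal{P}}$, yields a fixed point in $\tilde{\mathcal{P}}$) and the condition $Tx\not\ge x$ on $\partial P_R$ (which gives $i_P(T,P_R)=1$ and, combined with $i_P(T,V)=0$ and the excision/solution property, a nonzero fixed point in the prescribed annular region). Your auxiliary observation $\|\beta\|\ge R$ is correct but not actually needed, since normality applied to $\theta\le x_0\le z\le\beta$ already gives $\|z\|\ge R/d$, exactly as in the paper's argument.
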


\begin{proof}  Since $B[\beta,{R}]\subset P$ we have that if $x\in
P$ with $\|x\|={R}$ then  $x\leq \beta$.

Suppose first that there exists $\alpha\in P_{\rho}$ with $\|\alpha
\|={R}$ and $T\alpha \geq \alpha$.  Then $\alpha\le \beta$ and due to the normality of the cone $P$ we have that 
$$ \alpha \le x \implies R=\| \alpha \|  \le d \|x\|.$$
So $[\alpha,
\beta]\subset \mathcal{\tilde P}$ and from (2) it follows that $T$ is non-decreasing on
$[\alpha, \beta]$ .
Then we can apply Theorem~\ref{mim} to ensure the existence of
a fixed point of $T$ on $[\alpha, \beta]$, which, in
particular, is a non-trivial fixed point.

Now suppose that such $\alpha$ does not exist and that $T$ is fixed point free on $\partial P_R$ (if not, we would have the desired non-trivial fixed point). Thus 
$Tx \ngeq x$ for all $x\in P_{\rho}$ with $\|x\|={R}$, which by Proposition~\ref{propindex}, $(iii)$ implies that $i_P(T,P_{R})=1$. Since, by assumption, $i_P(T,V)=0$ we get the existence of  a 
fixed point $x_1$ belonging to the set $V\setminus \overline{P_{R}}$ (when $\overline{P_{R}}\subset V$) or to the 
$P_{R}\setminus \overline{V}$  (when $\overline{V}\subset P_{R}$). In both cases $x_1$ would be a non-trivial fixed point of $T$.
\end{proof}

\begin{rem} In Theorem \ref{thCCI}, (3) with the condition $i_P(T,V)=0$, due to the construction of the fixed point index, we implicitly assume that $T$ is fixed point free on $\partial V$. On the other hand, notice that  Condition (i) in Proposition~\ref{propindex} provides a rather general condition implying index zero and, typically in applications, one would try $e\equiv 1$ or $e$ a positive eigenfunction related to a suitable compact linear operator, see~\cite{Amann-rev}.   We develop further this idea in the following section.
\end{rem}
 
\section{Sufficient conditions for index zero}

In this Section we deal with an abstract Hammerstein fixed point equation in a product space. For $i=1,2,\ldots,n$ let $X_i$ a Banach space ordered by a normal cone $P_i$ and consider the product Banach space $X=\prod_{i=1}^n X_i$ with the maximum norm and ordered by the cone $P=\prod_{i=1}^n P_i$. Let us consider the operators $F_i:D\subset X\to X_i$ and $K_i:X_i\to X_i$ and the fixed point equation
\begin{equation}\label{eqfixpoint}   
u=Tu:=(K_1 F_1 u,K_2 F_2 u,\ldots, K_n F_n u) \quad \mbox{for $u\in D\subset X$.}
\end{equation}

By exploiting the structure of the operator $T$ in the equation \eqref{eqfixpoint} we are able to prove the following result which generalizes \cite[Lemma 2.2]{lan2} in two ways: firstly, the inequality in our condition (H1) is asked only for one component $F_i$ and, secondly, we do not need to add a positive $\varepsilon$ to its right-hand side.  Thus, the applicability of our result is wider and it includes for instance  \cite[Theorem 4.3]{jw-tmna}  which is not covered by \cite[Lemma 2.2]{lan2}. 

\begin{thm}\label{thindex0-2}  Assume the following conditions:
\begin{itemize}
\item[(H0)] For each $i=1,2,\ldots,n$ the operator $K_i$ is linear, compact, $K_i(P_i)\subset P_i$ and there exist $ \alpha_i>0$ and $e_i\in P_i\setminus \{0\}$ such that $K_i e_i\ge \alpha_i e_i$.
\item[(H1)] There exists $\rho_0>0$ such that $\bar{P}_{\rho_0}\subset D$, for each $i=1,2,\ldots,n$ the operator $F_i$ is continuous and bounded on $\bar{P}_{\rho_0}$, $F_i(\bar{P}_{\rho_0})\subset P_i$, 
and there exist $i_0\in \{1,2,\ldots,n\}$ such that
$$F_{i_0}(u)\ge \frac{1}{\alpha_{i_0}} u_{i_0} \quad \mbox{for $u\in \partial P_{\rho_0}$.}$$
\end{itemize}
If $u \not=Tu$ for $u\in \partial P_{\rho_0}$ then $i_P(T, P_{\rho_0})=0$.

\end{thm}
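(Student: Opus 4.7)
The natural plan is to apply part (i) of Proposition \ref{propindex} with the test element
\[
e=(0,\ldots,0,e_{i_0},0,\ldots,0)\in P\setminus\{\theta\},
\]
placing the vector $e_{i_0}$ from (H0) in the $i_0$-th coordinate and zeros elsewhere. So I would argue by contradiction: suppose there exist $u\in\partial P_{\rho_0}$ and $\lambda>0$ with $u=Tu+\lambda e$. Projecting onto the $i_0$-th factor gives
\[
u_{i_0}=K_{i_0}F_{i_0}(u)+\lambda e_{i_0}.
\]
Since $K_{i_0}F_{i_0}(u)\in P_{i_0}$, this immediately yields the base estimate $u_{i_0}\ge \lambda e_{i_0}$.

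The key step is a bootstrap argument fuelled by (H0) and (H1). From (H1) we have $F_{i_0}(u)\ge \alpha_{i_0}^{-1}u_{i_0}$; since $K_{i_0}$ is linear and maps $P_{i_0}$ into itself, it is order-preserving, so
\[
K_{i_0}F_{i_0}(u)\ge \tfrac{1}{\alpha_{i_0}}K_{i_0}u_{i_0}.
\]
Now I would prove by induction that $u_{i_0}\ge n\lambda e_{i_0}$ for every $n\in\N$. Assuming the estimate for $n$, the order-preserving property of $K_{i_0}$ together with $K_{i_0}e_{i_0}\ge \alpha_{i_0}e_{i_0}$ gives $K_{i_0}u_{i_0}\ge n\lambda\alpha_{i_0}e_{i_0}$, hence $\alpha_{i_0}^{-1}K_{i_0}u_{i_0}\ge n\lambda e_{i_0}$, and substituting back into the identity for $u_{i_0}$ produces $u_{i_0}\ge (n+1)\lambda e_{i_0}$.

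To close the argument I would invoke the Archimedean property of the cone $P_{i_0}$ recalled in Section 2: from $n(\lambda e_{i_0})\le u_{i_0}$ for all $n\in\N$ it follows that $\lambda e_{i_0}\le \theta$, which combined with $\lambda e_{i_0}\in P_{i_0}$ forces $\lambda e_{i_0}=\theta$ and thus contradicts $\lambda>0$ and $e_{i_0}\ne \theta$. This contradicts the existence of $(u,\lambda)$, so Proposition \ref{propindex}(i) applies (the hypothesis $u\ne Tu$ on $\partial P_{\rho_0}$ takes care of the fixed-point-free requirement built into the index) and yields $i_P(T,P_{\rho_0})=0$.

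I do not expect a major obstacle; the only delicate point is to keep the argument purely in component $i_0$, since (H1) is assumed only there. The advantage of using $e$ concentrated in the $i_0$-th slot is precisely that the equation $u=Tu+\lambda e$ decouples on that component and the other components of $u$ play no role. The Archimedean closure at the end is what explains why, unlike the cited result of Lan, no extra additive $\varepsilon$ is needed in (H1).
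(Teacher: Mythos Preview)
Your argument is correct and follows the same route as the paper: contradict Proposition~\ref{propindex}(i), project onto the $i_0$-th component, bootstrap via (H0)--(H1) to $u_{i_0}\ge n\lambda e_{i_0}$, and finish with the Archimedean property. The only cosmetic difference is that the paper takes the test element $e=(e_1,\ldots,e_n)$ with all components nonzero, whereas you concentrate $e$ in the $i_0$-th slot; since the contradiction is derived entirely in component $i_0$, both choices work and the proofs are otherwise identical.
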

\begin{proof}
Defining $e=(e_1,e_2,\ldots,e_n)$ by Proposition~\ref{propindex} it is enough to prove that
\begin{equation}\label{eqindex0-1} u\neq Tu+\mu e \quad \mbox{for all $u\in \partial P_{\rho_0}$ and all
$\mu >0$.}
\end{equation}
If not, there exists $u\in \partial P_{\rho_0}$ and $\mu >0$ such that
\begin{equation}\label{eqindex0-2} u= Tu+\mu e. 
\end{equation}
Then $u\ge \mu e$ and in particular $u_{i_0}\ge \mu e_{i_0}$. Now by (H0) and (H1) we have
\begin{multline*}
u_{i_0}=K_{i_0} F_{i_0} u+\mu e_{i_0}\ge K_{i_0} \left(\frac{1}{\alpha_{i_0}} u_{i_0}\right) +\mu e_{i_0}\ge K_{i_0}\left(\frac{1}{\alpha_{i_0}}  (\mu e_{i_0})\right) +\mu e_{i_0}\\
=\frac{1}{\alpha_{i_0}}  \mu K_{i_0}(e_{i_0}) +\mu e_{i_0}\ge \frac{1}{\alpha_{i_0}}  \mu \alpha_{i_0} e_{i_0} +\mu e_{i_0}=2(\mu e_{i_0}).
\end{multline*}
Then $u_{i_0}\ge 2 (\mu e_{i_0})$ and by induction it can be proven that actually $u_{i_0}\ge n (\mu e_{i_0})$ for all $n\in \N$. By the Archimedian property we obtain $\mu e_{i_0} \le 0$, a contradiction.  
\end{proof}

\begin{rem}\label{remlbsr} Notice that if (H0) holds then $\alpha_{i_0}$ is a lower bound for the spectral radius $r(K_{i_0})$, that is $r(K_{i_0})\ge \alpha_{i_0}>0$ (see \cite[Theorem 5.4]{kras} or \cite[Theorem 2.7]{jw-tmna}). So the best value for $\alpha_{i_0}$ in order to make (H1) as weak as possible would be $r(K_{i_0})$, and in fact we can choose $\alpha_{i_0}=r(K_{i_0})$  in (H1)  if the assumptions of the celebrated Krein-Rutman theorem hold.
\end{rem}

\begin{cor}\label{corindex0-3}  Suppose that (H0) holds and also 
\begin{itemize}
\item[(H1')]  There exists $\rho_0>0$ such that $\bar{P}_{\rho_0}\subset D$,
for each $i=1,2,\ldots,n$ the operator $F_i$ is continuous, bounded on $\bar{P}_{\rho_0}$, $F_i(\bar{P}_{\rho_0})\subset P_i$,
 and there exist $i_0\in \{1,2,\ldots,n\}$ such that $P_{i_0}$ is a total cone and
$$F_{i_0}(u)\ge \frac{1}{r(K_{i_0})} u_{i_0} \quad \mbox{for $u\in \partial P_{\rho_0}$.}$$
\end{itemize}
If  $u \not=Tu$ for $u\in \partial P_{\rho_0}$ then $i_P(T, P_{\rho_0})=0$.

\end{cor}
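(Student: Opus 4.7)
The plan is to derive this corollary as a direct consequence of Theorem~\ref{thindex0-2}: the only gap between (H1') and (H1) is that (H1') uses the spectral radius $r(K_{i_0})$, which by Remark~\ref{remlbsr} may be strictly larger than the number $\alpha_{i_0}$ supplied by (H0). So the strategy is to ``upgrade'' (H0) at the index $i_0$, producing new data $(\tilde{e}_{i_0}, \tilde{\alpha}_{i_0})$ with $\tilde{\alpha}_{i_0} = r(K_{i_0})$. Once this upgrade is in place, (H1') becomes (H1) verbatim, and Theorem~\ref{thindex0-2} finishes the job.

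First, I would gather the ingredients for Krein--Rutman applied to $K_{i_0}$: the operator is linear and compact by (H0), it maps $P_{i_0}$ into itself by (H0), and $P_{i_0}$ is total by assumption in (H1'). Moreover, by Remark~\ref{remlbsr}, the spectral radius satisfies $r(K_{i_0}) \ge \alpha_{i_0} > 0$. These are exactly the hypotheses of the classical Krein--Rutman theorem, so there exists $\tilde{e}_{i_0} \in P_{i_0} \setminus \{0\}$ with $K_{i_0}\tilde{e}_{i_0} = r(K_{i_0})\, \tilde{e}_{i_0}$, which in particular yields $K_{i_0}\tilde{e}_{i_0} \ge r(K_{i_0})\,\tilde{e}_{i_0}$.

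Next, I would observe that the tuple $(\tilde{e}_{i_0}, r(K_{i_0}))$, combined with the already-given $(e_j, \alpha_j)$ for $j \ne i_0$, still satisfies (H0): for every $i$ we have a positive element of $P_i$ and a positive constant verifying the required inequality. With this refreshed version of (H0), the hypothesis (H1') reads
\[
F_{i_0}(u) \ge \frac{1}{r(K_{i_0})}\, u_{i_0} \quad \text{for } u \in \partial P_{\rho_0},
\]
which is precisely (H1) with $\alpha_{i_0}$ replaced by the eigenvalue $r(K_{i_0})$ corresponding to $\tilde{e}_{i_0}$. Since the assumption that $u \ne Tu$ on $\partial P_{\rho_0}$ is also shared, Theorem~\ref{thindex0-2} yields $i_P(T, P_{\rho_0}) = 0$.

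I do not expect a genuine obstacle here; the proof is essentially a one-line reduction. The only point that merits care is the invocation of Krein--Rutman in the form that requires just the cone to be total (rather than having nonempty interior), together with the positivity of $r(K_{i_0})$. Both are available thanks to the total-cone hypothesis explicitly stated in (H1') and the positivity of $\alpha_{i_0}$ in (H0). If I wanted to be extra cautious I would cite the precise version of Krein--Rutman used (for instance the formulation in~\cite{jw-tmna}), which in this paper's setup is exactly the one already alluded to in Remark~\ref{remlbsr}.
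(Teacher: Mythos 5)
Your proof is correct and follows exactly the paper's argument: use Remark~\ref{remlbsr} to get $r(K_{i_0})\ge\alpha_{i_0}>0$, invoke Krein--Rutman (available since $P_{i_0}$ is total and $K_{i_0}$ is compact) to produce an eigenvector in $P_{i_0}\setminus\{0\}$ associated with $r(K_{i_0})$, and then apply Theorem~\ref{thindex0-2} with $\alpha_{i_0}=r(K_{i_0})$. No gaps; this is the same reduction the authors give.
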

\begin{proof}
As explained in Remark \ref{remlbsr} we have $r(K_{i_0})\ge \alpha_{i_0}>0$ so the Krein-Rutman theorem  applies (see \cite[Theorem 7.26]{zeidler}) and then $r(K_{i_0})$ is an eigenvalue with an eigenvector $\varphi_{i_0} \in P_{i_0}\setminus \{0\}$, that is $K_{i_0}\varphi_{i_0}=r(K_{i_0})\varphi_{i_0}$. So (H0) also holds taking $\alpha_{i_0}=r(K_{i_0})$ which together with (H1') give us the desired conclusion applying Lemma \ref{thindex0-2}.
\end{proof}

\section{A fixed point formulation for elliptic systems}

\subsection{The assumptions}

We shall deal with the following system of second order elliptic BVPs
\begin{equation}
  \label{eqellipticsystem}
 \left\{
\begin{array}{ll}
   Lu_i(x)=\lambda_i f_i(x,u(x)), &  x\in \Omega,\quad i=1,2,\ldots,n, \\
 Bu_i(x)=0, & x\in \partial \Omega,\quad i=1,2,\ldots,n,
\end{array}
\right.
\end{equation}
where $\Omega\subset \R^m$, $m\ge 2$, is a suitable bounded domain,  $L$ is a strongly uniformly elliptic operator, $u(x)=(u_1(x),u_2(x),\ldots,u_n(x))$, $\lambda_i>0$, $f_i\in C\bigl(\bar{\Omega}\times \prod_{j=1}^n[0,\rho_j]\bigr)$ and $\rho_i>0$ for $i=1,2,\ldots,n$  and $B$ is a first order boundary operator. More precisely, we shall assume the conditions in \cite[Section~4 of Chapter~1]{Amann-rev} (see also \cite{lan1,lan2}), namely:

\begin{enumerate}

\item We suppose that  $\Omega\subset \R^m$, $m\ge 2$, is a bounded domain such that its boundary $\partial \Omega$ is an $(m-1)-$dimensional $C^{2,\mu}-$manifold for some $\mu\in (0,1)$, such that $\Omega$ lies locally on one side of $\partial \Omega$ (see \cite[Section 6.2]{zeidler} for more details).
\item $L$  is the second order elliptic operator given by
\begin{equation}\label{eqL} Lz(x)=-\sum_{i,j=1}^m a_{ij}(x)\frac{\partial^2 z}{\partial x_i \partial x_j}(x)+\sum_{i=1}^m a_{i}(x) \frac{\partial z}{\partial x_i} (x)+a(x)z(x), \quad \mbox{for $x\in \Omega$,}\end{equation}
where $a_{ij},a_{i},a\in C^{\mu}(\overline{\Omega})$ for $i,j=1,2,\ldots,m$, $a(x)\ge 0$ on $\bar{\Omega}$, $a_{ij}(x)=a_{ji}(x)$ on $\bar{\Omega}$ for $i,j=1,2,\ldots,m$.  Moreover $L$ is strongly uniformly elliptic, that is, there exists $\mu_0>0$ such that 
$$\sum_{i,j=1}^m a_{ij}(x)\xi_i \xi_j\ge \mu_0 \|\xi\|^2 \quad \mbox{for $x\in \Omega$ and $\xi=(\xi_1,\xi_2,\ldots,\xi_m)\in\R^m$.}$$

\item $B$ is the boundary operator given by
$$Bz(x)=b(x)z(x)+\delta \frac{\partial z}{\partial v}(x) \quad \mbox{for $x\in\partial \Omega$},$$
where $v$ is an outward pointing and nowhere tangent vector field on $\partial \Omega$ of class $C^{1,\mu}$ (not necessarily a unit vector field),  $\frac{\partial z}{\partial v}$ is the directional derivative of $z$ with respect to $v$,  $b:\partial \Omega \to \R$ is of class $C^{1,\mu}$ and moreover one of the following conditions holds:
\begin{enumerate}
\item $\delta =0$ and $b(x)\equiv 1$ (Dirichlet boundary operator).
\item $\delta =1$, $b(x)\equiv 0$ and $a(x)\not\equiv 0$ (Neumann boundary operator).
\item $\delta =1$, $b(x)\ge 0$ and $b(x)\not\equiv 0$ (Regular oblique derivative boundary operator).
\end{enumerate}
\end{enumerate}

\subsection{The solution operator} It is well-known, since the pioneering work by Schauder,  that under the previous conditions for each $g\in C^{\mu}(\bar{\Omega})$, the boundary value problem 
 \begin{equation}
  \label{eqelliptic}
 \left\{
\begin{array}{ll}
   Lz(x)=g(x), &  x\in \Omega, \\
 Bz(x)=0, & x\in \partial \Omega,
\end{array}
\right.
\end{equation}
has a unique classical solution $z\in C^{2,\mu}(\bar{\Omega})$ and the solution operator $K:C^{\mu}(\bar{\Omega})\to C^{2,\mu}(\bar{\Omega})$ defined as $Kg=z$ is linear, continuous and positive (due to the maximum principle). It is well known that if $g\in C(\bar{\Omega})$ and $m\ge 2$ then problem \eqref{eqelliptic} need not have a classical solution $z\in C^{2}(\bar{\Omega})$ (see a counterexample in \cite[Problem 6.3]{zeidler}). For our purposes it will be fundamental to consider a generalized solution operator extending $K$ to $C(\bar{\Omega})$. 

\begin{pro}[Generalized solution operator]\label{proK1} The operator $K$ can be extended uniquely to a continuous, linear and compact operator $K:C(\bar{\Omega})\to C(\bar{\Omega})$ (that we will denote again by the same name). 
\end{pro}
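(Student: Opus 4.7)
The plan is to construct the extension by combining an a priori $C(\bar\Omega)$-estimate with a density argument, and then to obtain compactness by upgrading the regularity of the extended solution via $L^p$ theory.

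First I would establish a maximum-principle-type inequality $\|Kg\|_{C(\bar\Omega)}\le C\,\|g\|_{C(\bar\Omega)}$ for every $g\in C^{\mu}(\bar\Omega)$, with $C$ independent of $g$. For the Dirichlet problem this is a classical consequence of the maximum principle (with a barrier function absorbing the zero-th order term $a(x)\ge 0$); for the Neumann and oblique cases the analogous $L^{\infty}$-bounds are available in the Amann framework cited in the paper. Since $C^{\mu}(\bar\Omega)$ is dense in $C(\bar\Omega)$ (polynomials restricted to $\bar\Omega$ are $C^{\infty}$, hence in $C^{\mu}(\bar\Omega)$, and dense by Stone--Weierstrass), the bounded linear map $K\colon C^{\mu}(\bar\Omega)\to C(\bar\Omega)$ extends uniquely to a bounded linear operator $K\colon C(\bar\Omega)\to C(\bar\Omega)$. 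Positivity of this extension is automatic, because the positive cone of $C(\bar\Omega)$ is closed.

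The compactness step is the core of the argument. Schauder $C^{2,\mu}$-estimates control $Kg$ only in terms of $\|g\|_{C^{\mu}}$ and are therefore useless for a merely continuous datum, so one has to leave the Hölder setting. For a fixed $p>m$ and $g\in C^{\mu}(\bar\Omega)\subset L^p(\Omega)$ the classical solution $Kg\in W^{2,p}(\Omega)$ satisfies the Calderón--Zygmund estimate
\[ \|Kg\|_{W^{2,p}(\Omega)}\le C_p\,\|g\|_{L^p(\Omega)}\le C_p\,|\Omega|^{1/p}\,\|g\|_{C(\bar\Omega)}, \]
and the Sobolev embedding $W^{2,p}(\Omega)\hookrightarrow C^{1,\alpha}(\bar\Omega)$ with $\alpha=1-m/p$ yields a bound $\|Kg\|_{C^{1,\alpha}(\bar\Omega)}\le C'\,\|g\|_{C(\bar\Omega)}$ on the dense subspace. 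Given any bounded sequence $\{g_n\}\subset C(\bar\Omega)$, I would approximate each $g_n$ uniformly by some $\tilde g_n\in C^{\mu}(\bar\Omega)$; the family $\{K\tilde g_n\}$ is then bounded in $C^{1,\alpha}(\bar\Omega)$, hence relatively compact in $C(\bar\Omega)$ by Arzelà--Ascoli, and continuity of $K$ transfers this relative compactness to $\{Kg_n\}$.

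The delicate point is precisely this compactness step: the sup-norm of $g$ does not control any Hölder norm, so one must step outside the Schauder category and invoke elliptic $L^p$-regularity together with Sobolev embedding. Once the $C^{1,\alpha}$-bound is in place, uniqueness of the extension, together with its linearity, continuity, positivity and compactness, all follow automatically from the corresponding properties of $K$ on the dense subspace $C^{\mu}(\bar\Omega)$.
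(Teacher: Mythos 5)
Your proposal is correct and follows essentially the same route as the argument the paper implicitly relies on through the cited Amann framework (density of $C^{\mu}(\bar{\Omega})$ in $C(\bar{\Omega})$ plus $L^p$-elliptic estimates and the embedding $W^{2,p}(\Omega)\hookrightarrow C^{1,\alpha}(\bar{\Omega})$ for $p>m$, giving compactness via Arzel\`a--Ascoli); the paper itself states the proposition without proof, deferring to that theory. The only point worth making explicit is that the estimate $\|Kg\|_{W^{2,p}}\le C_p\|g\|_{L^p}$ without a lower-order term uses the invertibility of $(L,B)$ (or, equivalently, can be recovered by combining the Agmon--Douglis--Nirenberg estimate with your maximum-principle bound on $\|Kg\|_{\infty}$), which is available under the stated hypotheses.
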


Consider the cone of positive functions $P=C(\bar{\Omega},\R_+)$. The generalized solution operator $K$ is not only positive (i.e. $K(P)\subset P$), but also $e$-positive, as we stress in the following result, see \cite[Lemma 5.3]{amann-JFA}.
\begin{pro} \label{proK2} Let $e=K1\in C(\bar{\Omega},\R_+)\setminus \{0\}$. Then $K:C(\bar{\Omega})\to C(\bar{\Omega})$ is $e$-positive (and in particular positive), that is for each $g\in C(\bar{\Omega},\R_+)\setminus \{0\}$ there exist $\alpha_g>0$ and $\beta_g>0$ such that $\alpha_g e \le K g \le \beta_g e$. In particular, there exists $\alpha_e>0$ such that $Ke\ge \alpha_e$e.

\end{pro}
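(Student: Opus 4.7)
The plan is to establish the upper and lower estimates separately, using the linearity and positivity of $K$ for the easy direction and the strong maximum principle together with Hopf's boundary point lemma for the harder direction.

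For the upper bound, I would observe that any $g\in C(\bar{\Omega},\R_+)$ satisfies $0\le g(x)\le \|g\|_\infty$ on $\bar{\Omega}$, so by linearity of $K$ together with the positivity property $K(P)\subset P$ applied to the nonnegative function $\|g\|_\infty\cdot 1 - g$, I obtain $0\le Kg\le \|g\|_\infty K1=\|g\|_\infty e$. Hence $\beta_g:=\|g\|_\infty$ works for every $g$.

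For the lower bound, I would first reduce to smooth data. Since $g\in C(\bar{\Omega},\R_+)$ is nonzero and continuous, there exist $x_0\in\Omega$, $r>0$ and $c>0$ with $g\ge c$ on the ball $B(x_0,r)\subset\Omega$. Choose a nonnegative bump function $\tilde{g}\in C^{\infty}(\bar{\Omega})$ supported inside $B(x_0,r)$ with $\tilde{g}\le g$ and $\tilde{g}\not\equiv 0$. Positivity of $K$ applied to $g-\tilde{g}\ge 0$ yields $Kg\ge K\tilde{g}$, so it suffices to find $\alpha>0$ with $K\tilde{g}\ge\alpha\, e$. Because $\tilde{g}\in C^\mu(\bar{\Omega})$, the function $u:=K\tilde{g}$ is a \emph{classical} $C^{2,\mu}(\bar{\Omega})$ solution of $Lu=\tilde{g}\ge 0$ in $\Omega$, $Bu=0$ on $\partial\Omega$, and similarly $e=K1$ is a classical solution of $Le=1$, $Be=0$, so I can apply pointwise maximum-principle arguments to both.

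The strong maximum principle yields $u>0$ and $e>0$ in $\Omega$. For the three admissible boundary operators the boundary analysis of the ratio $u/e$ goes as follows: in the Dirichlet case both $u$ and $e$ vanish on $\partial\Omega$, but Hopf's boundary point lemma delivers strictly negative outward normal derivatives $\partial u/\partial\nu,\partial e/\partial\nu<0$ on $\partial\Omega$; a first-order expansion along the inward normal then shows that $u/e$ extends continuously to $\partial\Omega$ with the positive limit $\partial u/\partial\nu\,/\,\partial e/\partial\nu$. In the Neumann case (where $a\not\equiv 0$) and in the regular oblique case (where $b\ge 0$, $b\not\equiv 0$), the same Hopf/strong maximum principle combined with $Bu=Be=0$ excludes the vanishing of $u$ and $e$ at any boundary point, so $u,e>0$ on all of $\bar{\Omega}$. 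In every case $u/e$ is a strictly positive continuous function on the compact set $\bar{\Omega}$; its minimum $\alpha>0$ furnishes $u\ge\alpha\, e$ and therefore $Kg\ge\alpha\, e$, so we may take $\alpha_g:=\alpha$. The ``in particular'' clause follows by applying the just-proved $e$-positivity with $g=e$ (note $e\ne 0$ because $L(0)=0\ne 1$), giving $Ke\ge\alpha_e\, e$.

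The main obstacle is the boundary analysis of $u/e$: the Dirichlet case really requires the full strength of Hopf's boundary point lemma to bound the ratio away from zero at $\partial\Omega$, and handling the three boundary operators (3)(a)--(c) in a uniform way requires invoking the appropriate strong maximum principle / Hopf statement in each situation, as is done in Amann's article \cite{amann-JFA}.
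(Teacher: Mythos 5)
Your argument is essentially correct, but it follows a genuinely different route from the paper: the paper offers no proof at all for this proposition, simply quoting it from Amann (\cite[Lemma 5.3]{amann-JFA}), where the $e$-positivity of the resolvent operator is established for all three boundary operators in a more general ordered-Banach-space setting. What you do instead is re-derive that lemma directly: the upper bound $Kg\le\|g\|_\infty e$ from linearity and positivity, and the lower bound by replacing $g$ with a smooth bump $\tilde g\le g$ (so that $K\tilde g$ is a classical solution and pointwise arguments apply), then invoking the strong maximum principle and Hopf's boundary point lemma to compare $K\tilde g$ with $e$ on $\bar\Omega$. This buys a self-contained, elementary PDE proof at the cost of carrying out the boundary case analysis yourself; the citation buys brevity and covers exactly the hypotheses (1)--(3) the paper assumes. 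Two small points to tighten: you implicitly use positivity of the \emph{extended} operator $K$ on $C(\bar\Omega)$ (for merely continuous data such as $\|g\|_\infty\cdot 1-g$ and $g-\tilde g$), which should be justified by density from positivity on $C^{\mu}(\bar\Omega)$; and in the Dirichlet case the claim that $u/e$ ``extends continuously'' to $\partial\Omega$ is more than you need and slightly delicate --- the standard and safer step is to use Hopf plus compactness of $\partial\Omega$ and $C^1$-regularity up to the boundary to get $u\ge c\,\mathrm{dist}(\cdot,\partial\Omega)$ and $e\le C\,\mathrm{dist}(\cdot,\partial\Omega)$, which already gives the uniform positive lower bound for $u/e$ near $\partial\Omega$.
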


\subsection{The Nemytskii operator}

If $I=\prod_{j=1}^n I_j\subset \R^n$, where $I_j\subset \R$ is a closed interval with nonempty interior for $j=1,2,\ldots,n$, and $D\subset C(\bar{\Omega},\R^n)$  is a nonempty set we define 
$$D_I=\{u\in D: u(x)\in I \quad \mbox{for all $x\in \bar{\Omega}$}\}.$$

We utilize the space $C(\bar{\Omega},\R^n)$, endowed with the norm $\|u\|:=\displaystyle\max_{i=1,2,\ldots,n} \{\|u_i\|_{\infty}\}$, where $\|z\|_{\infty}=\displaystyle\max_{x\in \bar{\Omega}}|z(x)|$, and consider (with abuse of notation) the cone $P=C(\bar{\Omega},\R^n_+)$.

For a function $f:\bar{\Omega}\times I\to\R$ the Nemytskii (or superposition) operator $F$ is defined as $Fu(x):=f(x,u(x))$ for $u\in C(\bar{\Omega},\R^n)_I$ and  $x\in \bar{\Omega}$.
We recall some useful properties of the Nemytskii operator, see for example~\cite{Amann-rev}; here by \emph{non-decreasing} we mean non-decreasing with respect to the components.

\begin{pro}\label{proF} If $f\in C(\bar{\Omega}\times I)$ then the following claims hold:

\begin{enumerate} 
\item  The Nemytskii operator $F:C(\bar{\Omega},\R^n)_I \to C(\bar{\Omega})$ is continuous and bounded. 

\item If $f(x,u)\ge 0$ for all $(x,u)\in \bar{\Omega}\times I$ with $u\in\R^n_+$ then $F$ is positive, that is, $F(P_I)\subset P$. 

\item If for every $x\in \bar{\Omega}$ the function $f(x,\cdot):I\to \R$ is non-decreasing then $F$ is non-decreasing.
 
\item  If $f\in C^{\mu}(\bar{\Omega}\times I)$ and $u\in C^1(\bar{\Omega},\R^n)_I$ then $Fu\in C^{\mu}(\bar{\Omega})$.

\end{enumerate}
\end{pro}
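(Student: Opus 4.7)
The plan is to verify the four claims in turn; each is a classical property of the Nemytskii operator whose proof rests on two basic tools: the compactness of the image $u(\bar\Omega)$ of any continuous $u\colon\bar\Omega\to I$, together with uniform continuity (respectively, H\"older continuity) of $f$ on compact subsets of $\bar\Omega\times I$.

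For part (1) I would first handle \emph{boundedness}. Given a bounded set $B\subset C(\bar\Omega,\R^n)_I$ with $\sup_{u\in B}\|u\|\le M$, the union $\bigcup_{u\in B}u(\bar\Omega)$ lies inside the compact set $K_M:=I\cap[-M,M]^n$, so $\|Fu\|_\infty\le \max_{\bar\Omega\times K_M}|f|$ uniformly in $u\in B$. For \emph{continuity} at a fixed $u_0\in C(\bar\Omega,\R^n)_I$, I would fix a compact neighbourhood $W\subset I$ of $u_0(\bar\Omega)$; by uniform continuity of $f$ on $\bar\Omega\times W$, for any $\varepsilon>0$ there exists $\eta>0$ such that $|v-v'|<\eta$ implies $|f(x,v)-f(x,v')|<\varepsilon$ for all $x\in\bar\Omega$ and $v,v'\in W$. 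Choosing $\delta>0$ so small that $\|u-u_0\|<\delta$ forces both $u(\bar\Omega)\subset W$ and $|u(x)-u_0(x)|<\eta$ for all $x$, one gets $\|Fu-Fu_0\|_\infty<\varepsilon$.

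Parts (2) and (3) are immediate pointwise consequences of the hypotheses: if $u\in P_I$ then $u(x)\in I\cap\R^n_+$ so $f(x,u(x))\ge0$, proving positivity; and if $u\le v$ componentwise then $f(x,u(x))\le f(x,v(x))$ by the hypothesis that $f(x,\cdot)$ is non-decreasing, proving monotonicity.

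Part (4) is the only slightly delicate step. For $u\in C^1(\bar\Omega,\R^n)_I$, the mean value inequality on the compact convex hull of $u(\bar\Omega)$ gives a Lipschitz constant $L$ for $u$ on $\bar\Omega$, hence $|u(x)-u(y)|\le L|x-y|\le L(\operatorname{diam}\bar\Omega)^{1-\mu}|x-y|^\mu$. Using the $C^\mu$ regularity of $f$ on a compact set $\bar\Omega\times W$ containing $\bar\Omega\times u(\bar\Omega)$, with H\"older constant $C$, I would estimate
\begin{equation*}
|Fu(x)-Fu(y)|=|f(x,u(x))-f(y,u(y))|\le C\bigl(|x-y|^\mu+|u(x)-u(y)|^\mu\bigr)\le C(1+L^\mu)|x-y|^\mu,
\end{equation*}
which together with the boundedness from (1) gives $Fu\in C^\mu(\bar\Omega)$. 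The main obstacle, such as it is, is arranging the Lipschitz/H\"older chain in (4) so that the constants depend only on $u$ through its $C^1$-norm and the fixed data of $f$; the rest of the proposition is entirely routine.
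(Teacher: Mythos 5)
Your proposal is correct, and in fact it supplies more than the paper does: the paper states Proposition~\ref{proF} without proof, simply recalling these classical properties of the superposition operator with a reference to Amann's review, so there is no in-text argument to compare against. Your treatment of (1)--(3) is the standard one and is sound, including the care taken for a possibly unbounded $I$ (restricting to the compact set $I\cap[-M,M]^n$ for boundedness and to a compact relative neighbourhood $W$ of $u_0(\bar\Omega)$ for continuity); in the paper's application $I=\prod_j[0,\rho_j]$ is compact, so this reduces to uniform continuity of $f$ on $\bar\Omega\times I$.

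The only point to tighten is the justification of the Lipschitz bound in (4). You invoke ``the mean value inequality on the compact convex hull of $u(\bar\Omega)$'', but the convex hull of the \emph{image} is irrelevant: the mean value inequality must be applied along segments in the \emph{domain}, and $\bar\Omega$ need not be convex, so the segment joining $x,y\in\bar\Omega$ may leave $\bar\Omega$. The conclusion $|u(x)-u(y)|\le L|x-y|$ is nevertheless true here, but it uses the standing regularity of $\partial\Omega$ (a $C^{2,\mu}$ boundary makes $\bar\Omega$ quasiconvex, i.e.\ the intrinsic distance is comparable to the Euclidean one, so $\|\nabla u\|_\infty<\infty$ yields a Lipschitz bound), or alternatively a $C^1$ extension of $u$ to a neighbourhood of $\bar\Omega$. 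With that fix, your chain of estimates
$|Fu(x)-Fu(y)|\le C\bigl(|x-y|^\mu+|u(x)-u(y)|^\mu\bigr)\le C(1+L^\mu)|x-y|^\mu$
is exactly the standard argument and the constants depend only on the $C^1$-norm of $u$ and the H\"older data of $f$ on $\bar\Omega\times W$, as you intended.
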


\subsection{The fixed point formulation and weak solutions}

Now we can use the previous results in order to formulate the elliptic system~\eqref{eqellipticsystem} as a fixed point problem in the space of continuous functions. 

Let us define $I=\prod_{i=1}^n [0,\rho_i]$ and the operator $T:C(\bar{\Omega},\R^n)_I \to  C(\bar{\Omega},\R^n)$ as 
\begin{equation}
    \label{opT}
T(u):=(\lambda_1 K F_1(u),\lambda_2 K F_2(u),\ldots,\lambda_n K F_n(u)),
\end{equation}
where $F_i$ is the Nemytskii operator given by $F_i (u)(x)=f_i(x,u(x))$ for all $x\in \bar{\Omega}$.

\begin{defn} We shall say that $u\in C(\bar{\Omega},\R^n)_I $ is a {\it weak solution} of the problem  \eqref{eqellipticsystem} if and only if $u$ is a fixed point of operator $T$, that is, $u=Tu$. 
\end{defn}
Note that if $f$ is sufficiently smooth then, by regularity theory, a weak solution of~\eqref{eqellipticsystem} is also a classical one.  
\begin{pro} \label{proT}  Suppose that $f_i\in C(\bar{\Omega}\times I)$ for all $i=1,2,\ldots,n$. Then the following claims hold:

\begin{enumerate}

\item $T:C(\bar{\Omega},\R^n)_I \to  C(\bar{\Omega},\R^n)$ is a completely continuous operator.

\item If moreover $f_i \ge 0$ for all $i=1,2,\ldots,n$, then $T(P_I)\subset P$.

\item If $f_i\in C^{\mu}(\bar{\Omega}\times I)$ for all $i=1,2,\ldots,n$, then $u\in C^{2,\mu}(\bar{\Omega},\R^n)_I$  is a solution of \eqref{eqellipticsystem} if and only if  $u\in C(\bar{\Omega},\R^n)_I$ and $u=Tu$.  

\end{enumerate} 
\end{pro}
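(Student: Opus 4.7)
The plan is to assemble claims~(1) and~(2) and the forward direction of~(3) as immediate consequences of Propositions~\ref{proK1}, \ref{proK2}, and \ref{proF}, and to handle the reverse direction of~(3) by a standard elliptic regularity bootstrap.

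For~(1), I would argue componentwise. By Proposition~\ref{proF}(1), each Nemytskii operator $F_i\colon C(\bar{\Omega},\R^n)_I\to C(\bar{\Omega})$ is continuous and bounded; by Proposition~\ref{proK1}, $K\colon C(\bar{\Omega})\to C(\bar{\Omega})$ is linear and compact. Hence $\lambda_i K\circ F_i$ is continuous and maps bounded sets to relatively compact sets, and since $C(\bar{\Omega},\R^n)$ carries the maximum norm, the Cartesian product $T$ of these maps is completely continuous. For~(2), assuming $f_i\ge 0$, Proposition~\ref{proF}(2) gives $F_i(P_I)\subset C(\bar{\Omega},\R_+)$, while $K$ is positive by Proposition~\ref{proK2}; combined with $\lambda_i>0$ this yields $T(P_I)\subset P$.

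For~(3), the forward implication is immediate: if $u\in C^{2,\mu}(\bar{\Omega},\R^n)_I$ solves~\eqref{eqellipticsystem}, then $F_iu\in C^{\mu}(\bar{\Omega})$ by Proposition~\ref{proF}(4), and by the uniqueness of the classical solution of the scalar problem~\eqref{eqelliptic} each $u_i=\lambda_i KF_iu$, i.e.~$u=Tu$. The main obstacle lies in the reverse implication. Starting from $u\in C(\bar{\Omega},\R^n)_I$ with $u=Tu$, continuity of $f_i$ only gives $F_iu\in C(\bar{\Omega})$, so \emph{a priori} $u_i=\lambda_i KF_iu$ is merely continuous. To upgrade this to a classical $C^{2,\mu}$ solution I would run a two-step bootstrap: first, apply $L^p$ elliptic estimates to $Lu_i=\lambda_i F_iu$, $Bu_i=0$ with continuous right-hand side to obtain $u_i\in W^{2,p}(\Omega)$ for every $p<\infty$, and then invoke Morrey's embedding to get $u_i\in C^{1,\alpha}(\bar{\Omega})$ for some $\alpha\in(0,1)$; second, with $u\in C^1(\bar{\Omega},\R^n)_I$ and $f_i\in C^{\mu}$, Proposition~\ref{proF}(4) produces $F_iu\in C^{\mu}(\bar{\Omega})$, and the Schauder theory underlying the very construction of $K$ on Hölder data then lifts $u_i$ to $C^{2,\mu}(\bar{\Omega})$, satisfying $Lu_i=\lambda_i F_iu$ and $Bu_i=0$ in the classical sense. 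The technical delicacy is that the bootstrap must be quoted uniformly for the three boundary operators listed in (3)(a)--(c), but this is exactly the framework of Amann~\cite{Amann-rev} that underlies the rest of this section and can be cited directly.
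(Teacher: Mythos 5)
Your proposal is correct and follows the route the paper intends: the paper states Proposition~\ref{proT} without a written proof, treating claims (1) and (2) as immediate consequences of Propositions~\ref{proK1}, \ref{proK2} and \ref{proF}, and claim (3) as the standard regularity argument within Amann's framework, which is exactly what you assemble. The only point worth flagging is in the reverse direction of (3): Proposition~\ref{proK1} as stated only gives $K\colon C(\bar{\Omega})\to C(\bar{\Omega})$, so the step ``apply $L^p$ estimates to $Lu_i=\lambda_i F_i u$'' tacitly uses that $Kg$ for merely continuous $g$ is the strong $W^{2,p}$ solution (equivalently, that $K$ factors through $W^{2,p}$ and hence maps $C(\bar{\Omega})$ into $C^{1,\alpha}(\bar{\Omega})$); this is part of the construction of the generalized solution operator in \cite{Amann-rev} rather than of the bare statement of Proposition~\ref{proK1}, and since you cite that framework explicitly the argument is complete. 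After that, your two-step bootstrap (first $u\in C^{1}(\bar{\Omega},\R^n)_I$, then $F_iu\in C^{\mu}(\bar{\Omega})$ via Proposition~\ref{proF}(4) and Schauder theory, using that $K$ restricted to H\"older data is the classical solution operator) is precisely the regularity upgrade invoked in Remark~\ref{rem-reg}.
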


\section{Existence of positive solutions for elliptic systems}

The following is our main result. Remember that by Proposition \ref{proK2} and Remark \ref{remlbsr} we have that $r(K)>0$, where $r(K)$ is the spectral radius of $K$. We denote by $\mu_1=1/r(K)$.

\begin{thm}\label{thmain}  Assume that $f_i\in C(\bar{\Omega}\times \prod_{j=1}^n [0,\rho] )$ for all $i=1,2,\ldots,n$ and moreover:
\begin{itemize}

\item[(a)] For each $x\in \bar{\Omega}$ and for every $i=1,2,\ldots,n$, $f_i(x,\cdot):\prod_{j=1}^n [0,\rho] \to [0, +\infty)$ is non-decreasing.  

\item[(b)] There exist $\delta \in (0,+\infty)$, $i_0\in \{1,2,\ldots,n\}$ and $\rho_0 \in (0,\rho)$ such that $f_{i_0}(x,u)\ge \delta u_{i_0}$ for all $x\in \bar{\Omega}$ and $u\in\prod_{j=1}^n [0,\rho_0]$.

\item[(c)] There exist $(\beta_1, \beta_2,\ldots ,\beta_n)\in \prod_{j=1}^n (0,\rho]$ such that $m_j(\beta_1, \beta_2,\ldots ,\beta_n)>0$ for all $j=1,2,\ldots,n$, $j\not=i_0$, where
$
m_j(\beta_1, \beta_2,\ldots ,\beta_n)= \displaystyle\max_{x\in \bar{\Omega}}f_j(x,\beta_1, \beta_2,\ldots ,\beta_n)$ for all $j=1,2,\ldots,n$.

\end{itemize}

Then the system~\eqref{eqellipticsystem} has a nonzero weak positive solution $u$ such that $0<\|u\|\leq \rho$ provided that 
\begin{equation}
\label{eqlambda-1}
0< \lambda_j \leq  
\frac{\beta_j}{m_j(\beta_1, \beta_2,\ldots ,\beta_n)  \| K(1)\|_{\infty}} \quad \mbox{for all $j=1,2,\ldots,n$, $j\not=i_0$,}
\end{equation}
and
\begin{equation}
\label{eqlambda-2}
\frac{\mu_1}{\delta}< \lambda_{i_0} \leq  
\frac{\beta_{i_0}}{m_{i_0}(\beta_1, \beta_2,\ldots ,\beta_n)  \| K(1)\|_{\infty}}.
\end{equation}
\end{thm}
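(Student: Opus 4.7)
The plan is to apply Theorem~\ref{thCCI} to the fixed point operator $T$ of \eqref{opT}, regarded as a completely continuous map of $\overline{P_{\rho}}$ into $P=C(\bar{\Omega},\R^n_+)$; this product cone is normal with constant $d=1$ and has nonempty interior (the strictly positive functions), and the mapping and continuity properties come from Proposition~\ref{proT}.

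For the upper iterate in Theorem~\ref{thCCI} I would take $\beta=(\beta_1,\dots,\beta_n)$ viewed as a tuple of constant functions, so that $\|\beta\|=\max_i\beta_i\le\rho$ and hence $\beta\in\overline{P_{\rho/d}}=\overline{P_{\rho}}$. Using monotonicity (a) via Proposition~\ref{proF}(3) and positivity of $K$, one gets pointwise
$$
\lambda_i KF_i(\beta)\le\lambda_i m_i(\beta_1,\dots,\beta_n)\,K(1)\le\lambda_i m_i(\beta_1,\dots,\beta_n)\,\|K(1)\|_{\infty},
$$
and the upper estimates in \eqref{eqlambda-1}--\eqref{eqlambda-2} turn this into $T\beta\le\beta$ (the possibility $m_{i_0}(\beta)=0$ being harmless, since then $T_{i_0}\beta\equiv 0\le\beta_{i_0}$). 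I would then pick $R\in(0,\min\{\rho_0,\min_i\beta_i\})$, which simultaneously secures $B[\beta,R]\subset P$ and $\overline{P_R}\subset V:=P_{\rho_0}$, and note that $T$ is non-decreasing on the whole order interval $[0,\beta]\supset\tilde{\mathcal{P}}$, again by (a), Proposition~\ref{proF}(3), and positivity of $K$.

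The decisive step is $i_P(T,P_{\rho_0})=0$, which I would obtain from Corollary~\ref{corindex0-3} applied with the linear operators $K_i:=\lambda_i K$ and the Nemytskii maps $F_i$. Each $K_i$ is linear, compact and positive, the cone $P_{i_0}=C(\bar{\Omega},\R_+)$ is total (and even solid), and Proposition~\ref{proK2} gives $r(K_{i_0})=\lambda_{i_0}r(K)=\lambda_{i_0}/\mu_1>0$, so Krein--Rutman furnishes an eigenvector realizing (H0) with $\alpha_{i_0}=r(K_{i_0})$. For (H1$'$), any $u\in\partial P_{\rho_0}$ has $u(x)\in\prod_j[0,\rho_0]$ pointwise, so hypothesis (b) gives $F_{i_0}(u)\ge\delta u_{i_0}$, and the strict lower bound in \eqref{eqlambda-2} converts this into $F_{i_0}(u)\ge(\mu_1/\lambda_{i_0})u_{i_0}=r(K_{i_0})^{-1}u_{i_0}$. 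If $T$ already has a fixed point on $\partial P_{\rho_0}$ the theorem is proved; otherwise the corollary yields $i_P(T,P_{\rho_0})=0$.

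Assembling these pieces, Theorem~\ref{thCCI} produces a nonzero fixed point $u\in\overline{P_{\rho}}$ lying either in $\tilde{\mathcal{P}}$ (so $R\le\|u\|$ and $u\le\beta$, whence $\|u\|\le\max_i\beta_i\le\rho$) or in $P_{\rho_0}\setminus\overline{P_R}$; in either case $0<\|u\|\le\rho$, producing the claimed nonzero positive weak solution. The main technical care is the bookkeeping of the three scales $R<\rho_0<\rho$ together with $\beta_i$ so that the compatibility conditions of Theorem~\ref{thCCI} and of the index-zero step hold simultaneously; the conceptually substantive point is that the strict lower bound $\lambda_{i_0}>\mu_1/\delta$ is exactly what is needed to pit the nonlinear bound in (b) against the spectral radius of $K_{i_0}$ inside Corollary~\ref{corindex0-3}.
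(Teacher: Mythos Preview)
Your proof is correct and follows essentially the same route as the paper's own argument: set up $T$ on $\overline{P_\rho}$ in the normal cone $P=C(\bar\Omega,\R^n_+)$, use the upper bounds on the $\lambda_j$ together with monotonicity to verify $T\beta\le\beta$, invoke Corollary~\ref{corindex0-3} with $K_i=\lambda_iK$ and hypothesis~(b) to obtain $i_P(T,P_{\rho_0})=0$ (or a fixed point on $\partial P_{\rho_0}$), and then apply Theorem~\ref{thCCI}. You are in fact more explicit than the paper in choosing $R<\min\{\rho_0,\min_i\beta_i\}$ and in spelling out the spectral-radius bookkeeping $r(\lambda_{i_0}K)=\lambda_{i_0}/\mu_1$ that converts the strict inequality $\lambda_{i_0}>\mu_1/\delta$ into the hypothesis (H1$'$).
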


\begin{proof} We consider the cone $P=C(\bar{\Omega},\R^n_+)$, which is normal with constant $d=1$, in the space $C(\bar{\Omega},\R^n)$ with the maximum norm. $T$ maps $\overline{P_\rho} \rightarrow P$ and is completely continuous and nondecreasing by (a). 

Note that by (c) we have $\beta=(\beta_1, \beta_2,\ldots ,\beta_n)\in \overline{P_\rho}$, $\beta$ lies in the interior of $P$ and furthermore, for every $i=1,2,\ldots,n$,
\begin{multline}
\lambda_i KF_i(\beta_1,\beta_2,\ldots ,\beta_n)(x) \leq \| \lambda_i KF_i(\beta_1,\beta_2,\ldots ,\beta_n)\|_{\infty}\\
\leq \|\lambda_i K(m_i(\beta_1, \beta_2,\ldots ,\beta_n))\|_{\infty} \leq \lambda_i m_i(\beta_1, \beta_2,\ldots ,\beta_n)  \| K(1)\|_{\infty}\leq \beta_i.
\end{multline}%

Finally, from Corollary \ref{corindex0-3} and (b) we obtain that either $T$ has a fixed point in $\partial P_{\rho_0}$ or either 
$i_P(T, P_{\rho_0})=0$ for $\lambda_1>\mu_1/\delta$. Thus, in the second case we achieve also a nonzero fixed point of $T$ by Theorem \ref{thCCI}. 
\end{proof}

\begin{rem}\label{rem-reg} Notice that by Proposition \ref{proT}, part (3), if $f_i\in C^{\mu}(\bar{\Omega}\times I)$ for all $i=1,2,\ldots,n$, then any weak solution belongs to $C^{2,\mu}(\bar{\Omega},\R^n)_I$ and it is also a classical solution of \eqref{eqellipticsystem}.

\end{rem}

As an application, we present an existence result for a system with Dirichlet boundary conditions and the Laplacian operator.
\begin{cor}
Take $\Omega=\{(x_1,x_2)\in \mathbb{R}^2 : x_1^2+x_2^2<1\}$ and consider the system
\begin{equation} \label{example}
\left\{
\begin{array}{ll}
-\Delta u_1=\lambda_1 (|(u_1,u_2)|^\frac{1}{2} + \tan  |(u_1,u_2)| ),& \text{in }\Omega , \\
-\Delta u_2=\lambda_2 |(u_1,u_2)|^2, & \text{in }\Omega , \\
u_1,u_2=0, & \text{on }\partial \Omega ,%
\end{array}%
\right. 
\end{equation}%
where $|(u_1,u_2)|:=\max \{ |u_1|,|u_2|\}$. 

If we define $\rho=\frac{15 }{64}\pi$ then the system~\eqref{example} has a nontrivial positive solution $(u_1,u_2)\in C^{2,\frac1 2}(\bar{\Omega},\R^2_+)$ such that $0<\|(u_1,u_2)\|\le \frac{15 }{64}\pi$ for every
$$
0< \lambda_1 \leq  
\frac{4\rho}{m_1(\rho, \rho) } \approx 1.669,\ 0< \lambda_2 \leq  
\frac{4\rho}{m_2(\rho, \rho)  } \approx 5.432.
$$
\end{cor}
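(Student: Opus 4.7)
The plan is to apply Theorem \ref{thmain} with $i_0=1$ and $\beta_1=\beta_2=\rho=\frac{15}{64}\pi$, and to invoke Remark \ref{rem-reg} for the $C^{2,1/2}$ regularity. First I would note that $\rho<\pi/2$, so $\tan$ is smooth on $[0,\rho]$; the nonlinearities $f_1,f_2$ are then of class $C^{1/2}(\bar\Omega\times[0,\rho]^2)$ (since $\max\{u_1,u_2\}$ is Lipschitz, $\sqrt{\cdot}$ is $\frac12$-H\"older, and $\tan,(\cdot)^2$ are Lipschitz on $[0,\rho]$), so Remark \ref{rem-reg} upgrades any weak solution produced by Theorem \ref{thmain} to a classical one in $C^{2,1/2}(\bar\Omega,\R^2)$.

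For condition (a) of Theorem \ref{thmain}, I would observe that $(u_1,u_2)\mapsto\max\{u_1,u_2\}$ is non-decreasing in each component on $\R_+^2$, while the outer maps $\sqrt{\cdot}$, $\tan$ (on $[0,\pi/2)$) and $(\cdot)^2$ are all non-decreasing on $\R_+$; hence $f_1$ and $f_2$ inherit the required monotonicity.

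For condition (b) I would take $i_0=1$ and use $\max\{u_1,u_2\}\ge u_1$ together with $\tan t\ge 0$ on $[0,\rho]$ to get $f_1(x,u)\ge\sqrt{u_1}$ on $[0,\rho]^2$. For any prescribed $\delta>0$, the elementary inequality $\sqrt{u_1}\ge\delta u_1$ holds on $[0,1/\delta^2]$. Hence, given any $\lambda_1>0$, one may choose $\delta>\mu_1/\lambda_1$ and then $\rho_0\in(0,\min\{\rho,1/\delta^2\})$, so that (b) is satisfied and simultaneously $\mu_1/\delta<\lambda_1$. Condition (c) is immediate with $\beta_1=\beta_2=\rho$: one has $m_1(\rho,\rho)=\sqrt{\rho}+\tan\rho>0$ and $m_2(\rho,\rho)=\rho^2>0$.

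Finally, I would compute $\|K(1)\|_\infty$ on the unit disk by solving $-\Delta w=1$ with $w|_{\partial\Omega}=0$ explicitly: the radial solution is $w(x_1,x_2)=(1-x_1^2-x_2^2)/4$, so $\|K(1)\|_\infty=1/4$. Plugging this into \eqref{eqlambda-1}--\eqref{eqlambda-2} gives the bounds $\lambda_j\le 4\rho/m_j(\rho,\rho)$ for $j=1,2$, and the numerical approximations are direct evaluations at $\rho=15\pi/64$. The only mildly subtle point is the observation that condition (b) may be arranged with arbitrarily large $\delta$ by shrinking $\rho_0$; this is what collapses the lower bound $\mu_1/\delta<\lambda_1$ of Theorem \ref{thmain} to the bare $\lambda_1>0$ appearing in the statement.
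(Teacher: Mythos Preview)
Your proposal is correct and follows essentially the same approach as the paper's proof: apply Theorem~\ref{thmain} with $i_0=1$, $\beta_1=\beta_2=\rho$, compute $\|K(1)\|_\infty=\tfrac14$ from the explicit radial solution, and use Remark~\ref{rem-reg} for the $C^{2,1/2}$ regularity. Your treatment of condition~(b)---bounding $f_1\ge\sqrt{u_1}\ge\delta u_1$ on $[0,1/\delta^2]$ and then taking $\delta$ large---is in fact more explicit than the paper's one-line appeal to ``the behaviour near the origin''.
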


\begin{proof} Let $f_1,f_2:\bar{\Omega}\times[0,\rho]\times [0,\rho] \to \R$ be given by
$$f_1(x_1,x_2,u_1,u_2)=|(u_1,u_2)|^\frac{1}{2} + \tan  |(u_1,u_2)|  \quad \mbox{and} \quad f_2(x_1,x_2,u_1,u_2)= |(u_1,u_2)|^2.$$
 Note that $f_1,f_2\in C^{1/2}(\bar{\Omega}\times [0,\rho]\times [0,\rho])$, are non-decreasing and, given $\delta>0$, $f_1$ satisfies condition (b) in Theorem \ref{thmain} for $\rho_0$ sufficiently small, due to the behaviour near the origin.
By direct calculation, $K(1)=\frac{1}{4}(1-x_1^2-x_2^2)$ so $\|K(1)\|=\frac{1}{4}$ and 
fixing $\beta_1=\beta_2=\rho=\frac{15 }{64}\pi$ the result follows from Theorem \ref{thmain} and Remark \ref{rem-reg}. \end{proof}

In the case of a single equation
\begin{equation}
  \label{eqlaplacian}
 \left\{
\begin{array}{ll}
  L z=\lambda f(x,z), & x\in \Omega, \\
 Bz=0, & x\in \partial \Omega,
\end{array}
\right.
\end{equation}
where $\lambda>0$ and $f:\bar{\Omega}\times [0,\rho]\to \R$, a more precise result than Theorem \ref{thmain} can be obtained.

\begin{thm} \label{thm-single} Assume that $f\in C(\bar{\Omega}\times [0,\rho])$ and moreover:
\begin{itemize}

\item[i)] For each $x\in \bar{\Omega}$ the function $f(x,\cdot):[0,\rho] \to [0, +\infty)$ is non-decreasing.

\item[ii)] There exist $\delta \in (0,+\infty)$ and $\rho_0 \in (0,\rho)$ such that $f(x,z)\ge \delta z$ for all $x\in \bar{\Omega}$ and $z\in [0,\rho_0]$.

\end{itemize}

Then the BVP~\eqref{eqlaplacian} admits a non-zero weak positive solution $z$ such that $0<\|z\|_{\infty}<\rho$ provided that 
\begin{equation}
\label{eqlambda}
\frac{\mu_1}{\delta}\leq \lambda < \sup_{0<s\le \rho}  \frac{s}{M(s)\|K(1)\|_{\infty}},
\end{equation}
where $M(s)= \displaystyle\max_{x\in \bar{\Omega}}
f(x,s)>0$ for all $s\in (0,\rho]$.
\end{thm}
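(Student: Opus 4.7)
The plan is to mirror the proof of Theorem \ref{thmain} in the scalar case $n=1$, but to capitalise on the absence of cross terms in order to strengthen the admissible range of $\lambda$. I would set $X = C(\bar{\Omega})$ endowed with the sup-norm and ordered by the normal cone $P = C(\bar{\Omega},\R_+)$ (with normal constant $d=1$ and non-empty interior), and define $T\colon \overline{P_\rho}\to P$ by $Tz = \lambda K Fz$, where $Fz(x)=f(x,z(x))$ is the Nemytskii operator of $f$. By the scalar version of Proposition \ref{proT}, $T$ is completely continuous and sends $\overline{P_\rho}$ into $P$, and by (i) together with Proposition \ref{proF}(3) it is non-decreasing on $\overline{P_\rho}$.

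The strict inequality in \eqref{eqlambda} supplies some $\beta\in (0,\rho]$ with $\lambda M(\beta)\|K(1)\|_\infty<\beta$; identifying $\beta$ with the constant function on $\bar{\Omega}$ taking that value, the short computation
\[
T\beta(x)\le \lambda K(M(\beta))(x)=\lambda M(\beta)K(1)(x)\le \lambda M(\beta)\|K(1)\|_\infty<\beta
\]
gives $T\beta\le\beta$ with a strict margin that will matter later. Since hypothesis (ii) is inherited by any smaller $\rho_0$, I would shrink $\rho_0$ if needed so that $\rho_0<\beta$ and then pick $R\in(\rho_0,\beta)$; this simultaneously yields $B[\beta,R]\subset P$, $R<\rho$, and $\overline{P_{\rho_0}}\subset P_R$, so conditions (1) and the geometric part of (3) of Theorem \ref{thCCI} are in place.

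For the index computation $i_P(T,P_{\rho_0})=0$ I would apply Corollary \ref{corindex0-3} with $K_1=K$ and $F_1=\lambda F$: Proposition \ref{proK2}, together with the fact that $P$ is a total cone in $C(\bar{\Omega})$, supplies (H0) via Krein--Rutman, and for (H1') one uses that $z\in \partial P_{\rho_0}$ forces $z(x)\in [0,\rho_0]$, whence by (ii) and $\lambda\ge \mu_1/\delta$
\[
\lambda f(x,z(x))\ge \lambda \delta\, z(x)\ge \mu_1 z(x).
\]
If $T$ already has a fixed point on $\partial P_{\rho_0}$ the proof terminates; otherwise Corollary \ref{corindex0-3} gives $i_P(T,P_{\rho_0})=0$, and Theorem \ref{thCCI} delivers a nontrivial fixed point $z^*$ lying either in $\tilde{\mathcal{P}}$ or in $P_R\setminus\overline{P_{\rho_0}}$.

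The main subtlety I anticipate is upgrading the conclusion to the strict bound $\|z^*\|_\infty<\rho$ stated in the theorem. In the first alternative, monotonicity of $T$ and $z^*\le \beta$ give $\|z^*\|_\infty=\|Tz^*\|_\infty\le \|T\beta\|_\infty<\beta\le \rho$, leveraging exactly the strict inequality baked into the choice of $\beta$; in the second alternative the relation $R<\beta\le \rho$ does it directly. The coordination between the free parameters (the $\rho_0$ allowed to shrink, the $R$ to be selected, the $\beta$ fixed by the sup condition) is what must be carried out carefully, and it is precisely the passage $\lambda\ge \mu_1/\delta$ (rather than the strict version in Theorem \ref{thmain}) that permits the lower endpoint of the interval in \eqref{eqlambda} to be included.
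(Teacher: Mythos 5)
Your proposal is correct and follows essentially the same route as the paper's proof: a constant supersolution $\beta$ obtained from the strict inequality in \eqref{eqlambda}, the index-zero computation on $P_{\rho_0}$ via Corollary \ref{corindex0-3} (where $\lambda \ge \mu_1/\delta$ suffices, exactly as you note), and then Theorem \ref{thCCI} with $V=P_{\rho_0}$. Your extra bookkeeping (shrinking $\rho_0$, choosing $R\in(\rho_0,\beta)$, and the argument for the strict bound $\|z\|_\infty<\rho$) only spells out details that the paper's shorter proof leaves implicit.
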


\begin{proof}
By our assumptions, $T:P_{\rho}\to P$ is completely continuous and non-decreasing. Now, let $0<\beta\le \rho$ such that  $\lambda <  \displaystyle\frac{\beta}{M(\beta) \|K(1)\|_{\infty}}$. Then we have
$$T\beta = \|T\beta \|_{\infty}=\| K(\lambda F \beta)\|_{\infty} \leq\| \lambda  M(\beta) K(1) \|_{\infty}= \lambda  M(\beta) \|K(1)\|_{\infty}<\beta.$$

Finally, if we define $V=P_{\rho_0}$ then by Corollary \ref{corindex0-3} we have that either $T$ has a fixed point in $\partial V$ (which will be a non-zero positive solution) or either $i_P(T,V)=0$. In this last case Theorem \ref{thCCI} applies and we obtain the existence of a non-zero weak positive solution.
\end{proof}

\begin{cor}\label{exlaplacian} The problem
\begin{equation}
  \label{eqexample}
 \left\{
\begin{array}{ll}
  -\Delta z=\lambda (\sqrt{z}+\tan(z)), & x\in \Omega=\{(x_1,x_2)\in \R^2 : x_1^2+x_2^2<1\}, \\
 z(x)=0, & x\in \partial \Omega,
\end{array}
\right.
\end{equation}
has a non-zero positive solution $z$ such that $0<\|z\|_{\infty}<\frac{\pi}{2}$ provided that 
\begin{equation}
\label{ eqlambdaexample}
0 <\lambda < \sup_{ 0<s<\frac \pi 2} \frac{4 s}{\sqrt{s}+\tan(s)} \approx 1.66924.
\end{equation}
\end{cor}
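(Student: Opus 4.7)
The plan is to apply Theorem \ref{thm-single} to the one-equation problem with $L=-\Delta$, $B$ the Dirichlet operator, and $f(x,z)=\sqrt{z}+\tan z$, treated on intervals $[0,\rho]$ with $\rho<\pi/2$ so that $f$ is continuous. Hypothesis (i) is immediate since $f'(z)=1/(2\sqrt{z})+\sec^2 z>0$ on $(0,\pi/2)$, so $f(x,\cdot)$ is non-decreasing. For hypothesis (ii), I would exploit the square-root term at the origin: for any prescribed $\delta>0$, the inequality $\sqrt{z}\ge \delta z$ holds exactly on $[0,1/\delta^2]$, so setting $\rho_0=1/\delta^2$ yields $f(x,z)\ge \delta z$ on $[0,\rho_0]$. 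Thus $\delta$ may be chosen arbitrarily large, which makes the lower bound $\mu_1/\delta$ in \eqref{eqlambda} arbitrarily small.

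Next I would compute $K(1)$ explicitly. The function $w(x_1,x_2):=(1-x_1^2-x_2^2)/4$ is the classical (and, by uniqueness, hence the generalized) solution of $-\Delta w=1$ on the unit disk with $w|_{\partial\Omega}=0$, so $K(1)=w$ and $\|K(1)\|_\infty=w(0)=1/4$. With $M(s)=\sqrt{s}+\tan s$, the upper bound in \eqref{eqlambda} then becomes exactly
\[
\sup_{0<s\le\rho}\frac{s}{M(s)\,\|K(1)\|_\infty}=\sup_{0<s\le\rho}\frac{4s}{\sqrt{s}+\tan s}.
\]

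To conclude, given any $\lambda$ satisfying the corollary's bound, I would choose $\rho\in(0,\pi/2)$ close enough to $\pi/2$ so that $\sup_{0<s\le\rho}4s/(\sqrt{s}+\tan s)$ strictly exceeds $\lambda$ (this is possible because the supremum over the full interval $(0,\pi/2)$ is approached from below as $\rho\to\pi/2^-$), then pick $\delta$ large enough that both $\mu_1/\delta\le\lambda$ and $\rho_0=1/\delta^2<\rho$. Theorem \ref{thm-single} then delivers a nonzero weak positive solution $z$ with $0<\|z\|_\infty<\rho<\pi/2$. There is no serious obstacle here; the only point requiring care is the order in which the three parameters $\rho$, $\delta$, and $\rho_0$ are selected, and this is resolved by fixing $\rho$ from $\lambda$ first and then sending $\delta$ to infinity.
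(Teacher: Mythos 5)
Your proposal is correct and follows essentially the same route as the paper: verify (i) and (ii) of Theorem \ref{thm-single} using the $\sqrt{z}$ term near the origin (so $\delta$ can be taken arbitrarily large), compute $K(1)=\frac14(1-x_1^2-x_2^2)$ so $\|K(1)\|_\infty=\frac14$, and choose $\rho<\frac{\pi}{2}$ and $\delta$ so that \eqref{eqlambda} holds; your explicit ordering of the choices of $\rho$, $\delta$, $\rho_0$ just spells out what the paper states briefly. The only item the paper adds is the observation, via Remark \ref{rem-reg}, that $f\in C^{1/2}$ makes the weak solution a classical one.
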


\begin{proof}  Firstly, take into account that $K(1)(x_1,x_2)=\displaystyle \frac{1}{4}(1-x_1^2-x_2^2)$  for $\Omega=\{(x_1,x_2)\in \R^2 : x_1^2+x_2^2<1\}$ so  $\|K(1)\|=\frac{1}{4}$. Therefore, if $\lambda>0$ satisfies \eqref{ eqlambdaexample} we can choose $\delta>0$ and $0<\rho<\frac{\pi}{2}$ such that \eqref{eqlambda} is also satisfied. Since $f(x_1,x_2,z)=\sqrt{z}+\tan(z)$ is non-decreasing in $[0,\rho]$ and the growth condition at $z=0$, we have that conditions i) and ii), for every $\delta>0$, in Theorem~\ref{thm-single} are satisfied, and then the existence of a non-zero weak positive solution follows taking $\beta=\rho$. Since  $f\in C^{\frac 1  2}(\bar{\Omega}\times [0,\rho])$ the solution is also a classical one (see Remark \ref{rem-reg}).
\end{proof}

 \begin{rem} Since $f(0)=0$ then clearly $u\equiv 0$ is a solution of \eqref{eqexample}. Example \ref{exlaplacian} shows a {\it non-trivial} positive solution that cannot be obtained from \cite[Theorem 9.4]{Amann-rev} and neither seems to be covered by the results in \cite[Section 1]{lions}.
\end{rem}
\section*{Acknowledgements}
The authors wish to thank Dr. Mateusz Maciejewski for the useful comments and the anonymous Referee for the constructive remarks.
J. A. Cid was supported by Xunta de Galicia (Spain), project EM2014/032.
G. Infante was partially supported by G.N.A.M.P.A. - INdAM (Italy).

\end{document}